\documentclass[12pt]{amsart}

\usepackage{amsfonts, amsthm, amsmath}

\usepackage{rotating}

\usepackage{tikz}

\usepackage{graphics}

\usepackage{amssymb}

\usepackage{amscd}

\usepackage[latin2]{inputenc}

\usepackage{t1enc}

\usepackage[mathscr]{eucal}

\usepackage{indentfirst}

\usepackage{graphicx}

\usepackage{graphics}

\usepackage{pict2e}

\usepackage{mathrsfs}

\usepackage{enumerate}
\usepackage[pagebackref]{hyperref}
\hypersetup{backref, pagebackref, colorlinks=true}
%%%%%%%%%%%%%%%%%%
\usepackage{cite}
\usepackage{color}
\usepackage{epic}
\usepackage{hyperref} %this gives clickable references, which is nice
% and is helpful in editing.
%\usepackage{pdfsync} %temporary - helpful in editing (with TeXShop)
%\usepackage{showkeys} %temporary - helpful in editing
%%It works very well with my version of TeX (TeXShop) but
%%if it causes you any problems, you can delete it. We can delete it after finished.
\numberwithin{equation}{section}
\topmargin 0.8in
\textheight=8.2in
\textwidth=6.4in
\voffset=-.68in
\hoffset=-.68in

\theoremstyle{plain}

\newtheorem{theorem}{Theorem}[section]

\theoremstyle{definition}

\newtheorem{Def}[theorem]{Definition}

\newtheorem{example}[theorem]{Example}

\newtheorem{remark}[theorem]{Remark}

\newtheorem{?}[theorem]{Problem}

\def\boxit#1{\leavevmode\hbox{\vrule\vtop{\vbox{\kern.33333pt\hrule
    \kern1pt\hbox{\kern1pt\vbox{#1}\kern1pt}}\kern1pt\hrule}\vrule}}

\usepackage{collectbox}

\makeatletter

\makeatother

\begin{document}

\title[Multiranks and classical theta functions]{Multiranks and classical theta functions}

\author[S. Fu]{Shishuo Fu}
\address[Shishuo Fu]{College of Mathematics and Statistics, Chongqing University, Huxi Campus LD506, Chongqing 401331, P.R. China}
\email{fsshuo@cqu.edu.cn}

\author[D. Tang]{Dazhao Tang}

\address[Dazhao Tang]{College of Mathematics and Statistics, Chongqing University, Huxi Campus LD208, Chongqing 401331, P.R. China}
\email{dazhaotang@sina.com}

\date{\today}

\begin{abstract}
Multiranks and new rank/crank analogs for a variety of partitions are given, so as to imply combinatorially some arithmetic properties enjoyed by these types of partitions. Our methods are elementary relying entirely on the three classical theta functions, and are motivated by the seminal work of Ramanujan, Garvan, Hammond and Lewis.
\end{abstract}

\subjclass[2010]{05A17, 11P83}

\keywords{Partition congruences; Classical theta functions; Multirank; overpatitions; partitions with odd parts distinct.}

\maketitle

%\tableofcontents

%%%%%%%%%%%%%%%%%%%%%%%%%%%%%%%%%%%%%
\section{Introduction}\label{sec:intro}
%%%%%%%%%%%%%%%%%%%%%%%%%%%%%%%%%%%%%

Ramanujan's celebrated partition congruences have been a continuing source of inspiration and have motivated a tremendous amount of research for almost a century. Let $p(n)$ denote the number of partitions of $n$, then the aforementioned congruences (each being the first in an infinite family of congruences) read
\begin{align}
\label{p5}p(5n+4)&\equiv 0 \pmod 5,\\
\label{p7}p(7n+5)&\equiv 0 \pmod 7,\\
\label{p11}p(11n+6)&\equiv 0 \pmod {11}.
\end{align}

See Ramanujan \cite{Ram} and Winquist \cite{Win} for elementary proofs of \eqref{p5}--\eqref{p7} and \eqref{p11} respectively. Dyson \cite{Dys} was the first to consider these congruences combinatorially. In 1944, he discovered empirically the following remarkable results:

\begin{align*}
N(0,5,5n+4)=N(1,5,5n+4)=\cdots=N(4,5,5n+4)=\dfrac{p(5n+4)}{5},\\
N(0,7,7n+5)=N(1,7,7n+5)=\cdots=N(6,7,7n+5)=\dfrac{p(7n+5)}{7},
\end{align*}
where $N(m,t,n)$ is the number of partitions of $n$ with \emph{rank} congruent to $m$ modulo $t$. Recall that the rank of a partition, as defined by Dyson, is the largest part minus the number of parts. This partition statistic evidently provides a combinatorial explanation of \eqref{p5} and \eqref{p7}, but unfortunately fails to do so for \eqref{p11}. Dyson \cite{Dys} then conjectured the existence of certain partition statistic that will serve the same purpose as the rank for Ramanujan's third congruence \eqref{p11}, and he named it the \emph{``crank''}. In 1988, upon applying two identities from Ramanujan's ``lost'' notebook (see Andrews \cite{And} for an introduction), Garvan \cite{Gar} discovered the vector crank, which eventually led to the discovery of the elusive crank by Andrews and Garvan \cite{AG1}. The first of Ramanujan's two identities concerns
\begin{align*}
F_5(q):=\dfrac{(q;q)_{\infty}}{(\zeta_5q;q)_{\infty}(\zeta_5^{-1}q;q)_{\infty}},
\end{align*}
where $\zeta_5$ is a primitive fifth root of unity. Here and in the sequel, we will use the following customary $q$-series notations:
\begin{align*}
(a;q)_{\infty} &=\prod_{k=0}^{\infty}(1-aq^{k}),\quad |q|<1,\\
(a_{1},a_{2},\cdots,a_{n};q)_{\infty}&=(a_{1};q)_{\infty}(a_{2};q)_{\infty}\cdots(a_{n};q)_{\infty}.
\end{align*}
It is precisely this $F_5(q)$ that motivated Garvan to consider the following generating function:
\begin{align*}
\sum_{n=0}^{\infty}\sum_{m=-\infty}^{\infty}N_{V}(m,n)z^{m}q^{n}=\dfrac{(q;q)_{\infty}}{(zq;q)_{\infty}(z^{-1}q;q)_{\infty}}.
\end{align*}
This amounts to give unified explanations for \eqref{p5}--\eqref{p11}:
\begin{align}
\label{NV5}N_V(0,5,5n+4)=N_V(1,5,5n+4)=\cdots=N_V(4,5,5n+4) &=\dfrac{p(5n+4)}{5},\\
\label{NV7}N_V(0,7,7n+5)=N_V(1,7,7n+5)=\cdots=N_V(6,7,7n+5) &=\dfrac{p(7n+5)}{7},\\
\label{NV11}N_V(0,11,11n+6)=N_V(1,11,11n+6)=\cdots=N_V(10,11,11n+6) &=\dfrac{p(11n+6)}{11}.
\end{align}
For the definition of $N_V(m,n)$ and further details, the readers are referred to Garvan's original paper \cite{Gar}. One of the key components in Garvan's derivation of \eqref{NV5}--\eqref{NV11} is the famous Jacobi's triple product identity \cite[p.21, Theorem~2.8]{Andr1}. Using Ramanujan's notation \cite[p.6, Definition 1.2.1]{Ber}, it is given, for $|ab|<1$, by
\begin{align}
\label{JTP}f(a,b):=\sum_{n=-\infty}^{\infty}a^{n(n+1)/2}b^{n(n-1)/2}=(-a,-b,ab;ab)_{\infty}.
\end{align}
Making appropriate substitutions for $a$ and $b$ in \eqref{JTP} will then produce the product representations of the classical theta functions studied by Gauss, Jacobi and Ramanujan:
\begin{align}
\label{f}f(-q)&:=f(-q,-q^2)=\sum_{n=-\infty}^{\infty}(-1)^nq^{n(3n-1)/2}=(q;q)_{\infty},\\
\label{phi}\varphi(-q)&:=f(-q,-q)=\sum_{n=-\infty}^{\infty}(-1)^nq^{n^2}=\dfrac{(q;q)_{\infty}}{(-q;q)_{\infty}},\\
\label{psi}\psi(-q)&:=f(-q,-q^3)=\sum_{n=0}^{\infty}(-q)^{n(n+1)/2}=\dfrac{(q^2;q^2)_{\infty}}{(-q;q^2)_{\infty}}.
\end{align}

In 2004, Hammond and Lewis \cite{HL} defined the birank for $2$-colored partitions and explained that the residue of the birank modulo $5$ divided 2-colored partitions of $n$ into $5$ equinumerous classes provided $n\equiv2, 3~\textrm{or}~4\pmod{5}$. Garvan \cite{Gar2} then generalized their idea to define a multirank for multi-colored partitions, so as to give similar combinatorial explanations for two families of congruences (see Theorem~\ref{Gmulti} and Theorem~\ref{Gmultir} below) enjoyed by these multipartitions.

Notice that $f(-q)$'s reciprocal, namely $1/(q;q)_{\infty}$, is exactly the generating function of ordinary partitions, while the reciprocal of $\varphi(-q)$ (resp. $\psi(-q)$) is the generating function of overpartitions (resp. partitions with odd parts distinct, abbreviated as ``pod'' in what follows).  With this observation in mind, one naturally wonders if analogous generalizations can be obtained for multi-overpartitions and multi-pods. We give an affirmative answer in this paper and use the same idea to tackle various types of partition congruences in the literature that are missing the rank/crank explanations.

The rest of the paper is organized as follows. In Section \ref{sec:over and pod}, we unify the notion of multirank for multipartitions, multi-overpartitions and multi-pods, each of which implies a Ramanujan type congruence for the corresponding partitions (see Theorem~\ref{thm:multi-over} and Theorem~\ref{thm:multi-pod pa}). Next in Section \ref{sec: cubic}, we utilize a similar idea to generalize Reti's \cite{Ret} crank analog for cubic partitions to overcubic partitions (see Theorem~\ref{thm:overcubic}), as well as their closely related variations (see Theorem~\ref{thm:4c}). We conclude in the last section with some remarks and further problems.

%%%%%%%%%%%%%%%%%%%%%%%%%%%%%%%%%%%%%
\section{A unified multirank for three kinds of multipartitions}\label{sec:over and pod}
%%%%%%%%%%%%%%%%%%%%%%%%%%%%%%%%%%%%%
\subsection{Garvan's multirank for multipartitions}
Let $\mathcal{P}$ (resp. $\mathcal{\overline{P}}$, $\mathcal{\hat{P}}$) denote the set of all partitions (resp. overpartitions, pods). A multi-partition (resp. multi-overpartition, multi-pod) with $t$ components or a $t$-colored partition (resp. $t$-colored overpartition, $t$-colored pod) of $n$ is simply a $t$-tuple
\begin{align*}
\overrightarrow{\pi}=(\pi_{1},\pi_{2},\cdots,\pi_{t})\in\mathcal{P}\times\mathcal{P}\cdots\times\mathcal{P}
=\mathcal{P}^{t} (\textrm{resp}.~\mathcal{\overline{P}}^{t}, \mathcal{\hat{P}}^t),
\end{align*}
where
\begin{align*}
\sum_{i=1}^{t}|\pi_{i}|=n.
\end{align*}

We use $p_t(n)$ (resp. $\overline{p}_t(n)$, $\hat{p}_t(n)$) to denote the number of $t$-colored partitions (resp. $t$-colored overpartitions, $t$-colored pods) of $n$. And for a given partition $\pi$ of any type, we use $|\pi|$ and $\ell(\pi)$ to denote the sum of parts and the number of parts, respectively.

For completeness and comparison, let us briefly recall here Garvan's multirank generalization (see \cite{Gar2}) of the Hammond-Lewis birank.

\begin{theorem}[Theorem 7.1, \cite{Gar2}]\label{Gmulti}
Let $t>3$ be prime.
\begin{itemize}
\item[(i)] If $24n+1$ is a quadratic nonresidue mod t, then
\begin{align*}
p_{t-1}(n)\equiv 0 \pmod{t}.
\end{align*}
\item[(ii)] If $8n+1$ is a quadratic nonresidue mod t, then
\begin{align*}
p_{t-3}(n)\equiv 0 \pmod{t}.
\end{align*}
\end{itemize}
\end{theorem}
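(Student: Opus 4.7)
The strategy is to reduce both statements to a coefficient vanishing in a simple theta series via the ``freshman's dream'' congruence $(q;q)_\infty^t \equiv (q^t;q^t)_\infty \pmod{t}$, which holds for any prime $t$ by $(1-q^k)^t \equiv 1-q^{tk} \pmod{t}$. Combined with the generating function $\sum_{n\ge 0} p_s(n) q^n = 1/(q;q)_\infty^s$, this yields
\begin{align*}
\sum_{n\ge 0} p_{t-1}(n) q^n \equiv \frac{(q;q)_\infty}{(q^t;q^t)_\infty} \pmod{t}, \qquad \sum_{n\ge 0} p_{t-3}(n) q^n \equiv \frac{(q;q)_\infty^3}{(q^t;q^t)_\infty} \pmod{t}.
\end{align*}

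For the first identity I would invoke Euler's pentagonal number theorem \eqref{f} to expand $(q;q)_\infty$ as an alternating sum over exponents $k(3k-1)/2$, $k\in\mathbb{Z}$; for the second I would use Jacobi's identity $(q;q)_\infty^3=\sum_{k\ge 0}(-1)^k(2k+1)q^{k(k+1)/2}$, whose exponents are triangular numbers. Since $1/(q^t;q^t)_\infty$ contributes only powers $q^{tm}$, extracting the coefficient of $q^n$ on each right-hand side restricts attention to those $k$ satisfying $k(3k-1)/2\equiv n \pmod{t}$ (respectively $k(k+1)/2 \equiv n \pmod{t}$).

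The crux is the pair of algebraic identities
\begin{align*}
24\cdot\frac{k(3k-1)}{2}+1 = (6k-1)^2 \qquad\text{and}\qquad 8\cdot\frac{k(k+1)}{2}+1 = (2k+1)^2,
\end{align*}
which force $24n+1$ (resp.\ $8n+1$) to be a quadratic residue modulo $t$ whenever the relevant index set is nonempty. Hence under the hypothesis of (i) the coefficient sum for $p_{t-1}(n)$ is empty, so $p_{t-1}(n)\equiv 0 \pmod{t}$; analogously, under (ii) the sum for $p_{t-3}(n)$ is empty, giving $p_{t-3}(n)\equiv 0 \pmod{t}$.

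I foresee no substantial obstacle. The only mild point is that the summands in case (ii) carry an extra factor $(2k+1)$, which might suggest one needs to track when $2k+1 \equiv 0 \pmod{t}$; but because the index set is itself empty, the sum vanishes vacuously and no such bookkeeping is required. The argument is entirely elementary, resting only on the classical theta-function identities already highlighted in Section \ref{sec:intro}.
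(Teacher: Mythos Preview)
Your argument is correct. Note, however, that the paper does not itself supply a proof of this statement: Theorem~\ref{Gmulti} is simply quoted from Garvan~\cite{Gar2}, and the paper's own proofs are reserved for the overpartition and pod analogues (Theorems~\ref{thm:multi-over} and~\ref{thm:multi-pod pa}).

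That said, the method the paper (and Garvan) uses for those analogues is genuinely different from yours. Rather than invoking the ``freshman's dream'' congruence $(q;q)_\infty^t\equiv(q^t;q^t)_\infty\pmod t$, they introduce the two-variable multirank generating function, specialise $z$ to a primitive $t$-th root of unity $\zeta_t$, and exploit the factorisation $\prod_{j=1}^{t-1}(1-\zeta_t^j x)=(1-x^t)/(1-x)$ to collapse the product to a theta function divided by a series in $q^t$. The vanishing of the $q^n$-coefficient under the quadratic-nonresidue hypothesis then feeds into the minimal-polynomial argument for $\zeta_t$, yielding not merely the congruence but the stronger equidistribution statement of Theorem~\ref{Gmultir}. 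Your route is shorter and entirely adequate for Theorem~\ref{Gmulti} as stated, but it does not see the multirank at all and therefore cannot recover the refined residue-class count that is the paper's real focus.
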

\begin{remark}
We note that the second family (ii) above also appeared in Andrews' survey paper \cite{Andr2}, which discussed other interesting aspects of multipartitions.
\end{remark}

For $s$ even and $\overrightarrow{\pi}=(\pi_{1},\pi_{2},\cdots,\pi_{s})\in\mathcal{P}^{s}$, Garvan defined the generalized Hammond-Lewis multirank by
\begin{align*}
\textrm{gHL-multirank}(\overrightarrow{\pi})=\sum_{k=1}^{s/2}k(\ell(\pi_{k})-\ell(\pi_{s+1-k})).
\end{align*}

The ``rank-ish'' of this new statistic is justified by the following refinement.
\begin{theorem}[Theorem 7.2, \cite{Gar2}]\label{Gmultir}
Let $t>3$ be prime.
\begin{itemize}
\item[(i)] The residue of the generalized-Hammond-Lewis-multirank mod $t$ divides the multipartitions of $n$ with $s=t-1$ components into $t$ equal classes provided $24n+1$ is a quadratic nonresidue mod $t$.
\item[(ii)] The residue of the generalized-Hammond-Lewis-multirank mod $t$ divides the multipartitions of $n$ with $s=t-3$ components into $t$ equal classes provided
$8n+1$ is a quadratic nonresidue mod $t$.
\end{itemize}
\end{theorem}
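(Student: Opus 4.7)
\emph{Proof sketch.} The natural starting point is the bivariate generating function
\begin{align*}
G_s(z,q) := \sum_{\vec\pi\in\mathcal{P}^s} z^{\text{gHL-multirank}(\vec\pi)} q^{|\vec\pi|} = \prod_{k=1}^{s/2}\frac{1}{(z^k q;q)_\infty(z^{-k}q;q)_\infty},
\end{align*}
which is immediate from the definition of the multirank and the elementary identity $\sum_{\pi\in\mathcal{P}} z^{\ell(\pi)}q^{|\pi|} = 1/(zq;q)_\infty$. The plan is to substitute $z=\zeta_t^j$ for each $j=1,\dots,t-1$, show that the coefficient of $q^n$ in $G_s(\zeta_t^j,q)$ vanishes under the stated hypothesis, and then invert the resulting Vandermonde/DFT system.

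For part (i), with $s=t-1$, the exponents $\{\pm k\}_{k=1}^{(t-1)/2}$ cover every non-zero residue modulo $t$, so by $\prod_{i=1}^{t-1}(\zeta_t^i q;q)_\infty = (q^t;q^t)_\infty/(q;q)_\infty$ (a consequence of $\prod_{i=0}^{t-1}(1-x\zeta_t^i)=1-x^t$) the product telescopes to
\begin{align*}
G_{t-1}(\zeta_t^j,q) = \frac{(q;q)_\infty}{(q^t;q^t)_\infty}.
\end{align*}
Euler's pentagonal identity \eqref{f} combined with $24\cdot k(3k-1)/2+1=(6k-1)^2$ shows that each exponent $n$ of $q$ appearing in $(q;q)_\infty$ with nonzero coefficient satisfies $24n+1\equiv(6k-1)^2\pmod{t}$, a quadratic residue. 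Since $1/(q^t;q^t)_\infty$ only shifts exponents by multiples of $t$, the coefficient of $q^n$ in $G_{t-1}(\zeta_t^j,q)$ vanishes whenever $24n+1$ is a QNR modulo $t$.

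For part (ii), with $s=t-3$, exactly two non-zero residues are missed; writing $\alpha:=\zeta_t^{j(t-1)/2}$ one obtains
\begin{align*}
G_{t-3}(\zeta_t^j,q) = \frac{(q;q)_\infty (\alpha q;q)_\infty(\alpha^{-1}q;q)_\infty}{(q^t;q^t)_\infty}.
\end{align*}
The key step is applying Jacobi's triple product \eqref{JTP} with $a=-\alpha$, $b=-\alpha^{-1}q$; using $(\alpha;q)_\infty=(1-\alpha)(\alpha q;q)_\infty$ this rearranges to
\begin{align*}
(1-\alpha)(q;q)_\infty(\alpha q;q)_\infty(\alpha^{-1}q;q)_\infty = \sum_{n=-\infty}^{\infty}(-1)^n\alpha^n q^{n(n-1)/2}.
\end{align*}
The identity $8\cdot n(n-1)/2+1=(2n-1)^2$ then forces $8n+1$ to be a quadratic residue modulo $t$ for every exponent $n$ contributing to $G_{t-3}(\zeta_t^j,q)$, and the prefactor $1/(1-\alpha)$ is a nonzero constant, so the coefficient of $q^n$ vanishes whenever $8n+1$ is a QNR.

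Having established the vanishings for every $j=1,\dots,t-1$, the linear system
\begin{align*}
\sum_{m=0}^{t-1}N(m,t,n)\zeta_t^{jm}=0\quad(j=1,\dots,t-1),\qquad \sum_{m=0}^{t-1}N(m,t,n)=p_s(n)
\end{align*}
has invertible DFT coefficient matrix and forces $N(m,t,n)=p_s(n)/t$ for every $m$. I expect the main technical obstacle to be spotting the correct Jacobi triple product specialization in part (ii); once that identity is in hand, the rest is the standard root-of-unity filtering technique.
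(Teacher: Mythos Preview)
Your argument is correct. Note, however, that the paper does not actually prove this theorem---it is quoted as Garvan's result (Theorem~7.2 of \cite{Gar2}) without proof, and then used as a template for the paper's own Theorems~\ref{thm:multi-over} and~\ref{thm:multi-pod pa}. Comparing your sketch with the paper's proofs of those analogous results, the method is essentially the same: write the bivariate generating function, specialize $z$ to a primitive $t$-th root of unity, collapse the product via $\prod_{i=0}^{t-1}(1-x\zeta_t^i)=1-x^t$, and invoke the relevant theta identity (here $f(-q)$ and a Jacobi triple product specialization, there $\varphi(-q)$ and $\psi(-q)$) to see that only quadratic-residue exponents survive.

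The one genuine methodological difference is in the final step. You evaluate at \emph{every} $\zeta_t^j$, $j=1,\dots,t-1$, and then solve a full DFT system. The paper evaluates only at $\zeta_t$ and instead uses that the minimal polynomial of $\zeta_t$ over $\mathbb{Q}$ is $1+x+\cdots+x^{t-1}$, so any integer relation $\sum_{i=0}^{t-1}a_i\zeta_t^i=0$ forces $a_0=a_1=\cdots=a_{t-1}$. Your route is perhaps more self-contained (no algebraic number theory), while the paper's is shorter since a single specialization suffices; but the two arguments are equivalent and equally standard.
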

Recall that an \emph{overpartition}, as defined by Corteel and Lovejoy \cite{CLo}, is a partition in which the first occurrence of each distinct number may be overlined. The study of the \emph{pod} function was initiated by Alladi \cite{All,All2} combinatorially as well as by Hirschhorn and Sellers \cite{HS} arithmetically.
Motivated by the above results, we explore the power of the generalized-Hammond-Lewis-multirank further with multi-overpartition and multi-pods. It is understood that for $\pi\in\mathcal{\overline{P}}$ (resp. $\pi\in\mathcal{\hat{P}}$), $\ell(\pi)$ counts all parts in $\pi$, both overlined and non-overlined (resp. both even and odd), and we stick to the following notation for brevity.
\begin{Def}
For $t$ even, we define the multirank for all three kinds of multipartitions to be
\begin{align}\label{multirank}
\overline{r}(\overrightarrow{\pi})=\sum_{k=1}^{t/2}k(\ell(\pi_{k})-\ell(\pi_{t+1-k})),\text{
 for }
\overrightarrow{\pi}=(\pi_{1},\pi_{2},\cdots,\pi_{t})\in\mathcal{P}^t, \mathcal{\overline{P}}^t \text{ or }\mathcal{\hat{P}}^t.
\end{align}
\end{Def}

In the following two subsections, we will present congruences for $\overline{p}_{t}(n)$ and $\hat{p}_t(n)$ that are comparable to Theorem~\ref{Gmulti}~(i), as well as combinatorial explanations in terms of our multirank $\overline{r}$, a situation that is similar to Theorem~\ref{Gmultir}. Let $\overline{N}_s(m,n)$ (resp. $\hat{N}_s(m,n)$) denote the number of $s$-colored overpartitions (resp. pods) of $n$ with multirank $m$, and let $\overline{N}_s(m,t,n)$ (resp. $\hat{N}_s(m,t,n)$) denote the number of $s$-colored overpartitions (resp. pods) of $n$ with multirank congruent to $m$ modulo $t$. In general, when $*(m,n)$ enumerates the number of certain partitions (or multipartitions) of $n$ with certain statistic being $m$, then $*(m,t,n)$ counts those with that statistic  congruent to $m$ modulo $t$.

\subsection{Multirank for multi-overpartitions}
Now we are ready to state the parallel results for multi-overpartitions first.
\begin{theorem}\label{thm:multi-over}
Let $t$ be any odd prime, if $n\geq 1$ is any quadratic nonresidue modulo $t$, then for all $0\leq i\leq t-1$, we have
\begin{align}\label{rank:multi-over}
\overline{N}_{t-1}(i,t,n)=\dfrac{\overline{p}_{t-1}(n)}{t}\equiv 0 \pmod 2.
\end{align}
In particular, we have
\begin{align}\label{cong:multi-over}
\overline{p}_{t-1}(n)\equiv0\pmod{2t}.
\end{align}
\end{theorem}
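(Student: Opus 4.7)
The approach would follow Garvan's \cite{Gar2} template: build a bivariate generating function tracking $\overline{r}$, specialize at a primitive $t$-th root of unity $\zeta_t$, and exploit the classical theta-function structure that emerges. Starting from $\sum_{n\geq 0}\overline{p}(n)q^n=(-q;q)_{\infty}/(q;q)_{\infty}=1/\varphi(-q)$ and attaching $z^{\pm k}$ to the number of parts in the $k$-th (resp.\ $(t-k)$-th) component, I would first record
\begin{align*}
\sum_{n,m}\overline{N}_{t-1}(m,n)\,z^m q^n=\prod_{k=1}^{(t-1)/2}\frac{(-z^{k}q,-z^{-k}q;q)_{\infty}}{(z^{k}q,z^{-k}q;q)_{\infty}}.
\end{align*}
Setting $z=\zeta_t$, the values $\{\zeta_t^{\pm k}:1\leq k\leq(t-1)/2\}$ sweep over every primitive $t$-th root of unity, and the cyclotomic identities
\begin{align*}
\prod_{j=1}^{t-1}(1-\zeta_t^j x)=\frac{1-x^t}{1-x},\qquad \prod_{j=1}^{t-1}(1+\zeta_t^j x)=\frac{1+x^t}{1+x}
\end{align*}
(the second requiring $t$ odd) would collapse the product telescopically to $\varphi(-q)/\varphi(-q^t)$.

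Next, I would inspect the coefficient $[q^n]\varphi(-q)/\varphi(-q^t)$. Since $\varphi(-q)=\sum_{k\in\mathbb{Z}}(-1)^k q^{k^2}$ while $1/\varphi(-q^t)$ is a power series in $q^t$, this coefficient receives contributions only from indices $k$ with $n\equiv k^2\pmod t$. When $n$ is a quadratic non-residue modulo $t$, no such $k$ exists, so the coefficient vanishes and $\sum_{i=0}^{t-1}\overline{N}_{t-1}(i,t,n)\zeta_t^i=0$. Because $t$ is prime, the $\mathbb{Q}$-minimal polynomial of $\zeta_t$ is $1+z+\cdots+z^{t-1}$, and the integers $\overline{N}_{t-1}(i,t,n)$ must therefore all coincide; this delivers the equidistribution $\overline{N}_{t-1}(i,t,n)=\overline{p}_{t-1}(n)/t$ and the divisibility $t\mid\overline{p}_{t-1}(n)$ simultaneously.

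The parity half is inexpensive: modulo $2$ we have $(-q;q)_{\infty}\equiv(q;q)_{\infty}$, so $1/\varphi(-q)\equiv 1\pmod 2$ and hence $1/\varphi(-q)^{t-1}\equiv 1\pmod 2$, which forces $\overline{p}_{t-1}(n)$ to be even for every $n\geq 1$. Coprimality of $2$ with the odd prime $t$ then upgrades the two divisibilities to $2t\mid\overline{p}_{t-1}(n)$, yielding both \eqref{cong:multi-over} and the parity clause of \eqref{rank:multi-over}. I expect the main bookkeeping hurdle to lie in verifying that the pairing of the $2k$-th and $(t-k)$-th components really produces the full factor $\prod_{j=1}^{t-1}(-\zeta_t^j q;q)_{\infty}/(\zeta_t^j q;q)_{\infty}$ after the substitution; once $\varphi(-q)/\varphi(-q^t)$ is on the page, the rest of the argument is essentially automatic.
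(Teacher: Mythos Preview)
Your equidistribution argument is essentially identical to the paper's: both write down the bivariate generating function, specialize at $z=\zeta_t$, collapse the product via the cyclotomic identities to $\varphi(-q)/\varphi(-q^t)$, observe that the coefficient of $q^n$ vanishes when $n$ is a quadratic nonresidue, and invoke the irreducibility of $1+x+\cdots+x^{t-1}$ over $\mathbb{Q}$.

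The parity step, however, is handled differently. You argue algebraically that $(-q;q)_\infty\equiv(q;q)_\infty\pmod 2$, hence $1/\varphi(-q)^{t-1}\equiv 1\pmod 2$, so $\overline{p}_{t-1}(n)$ is even for all $n\geq 1$; coprimality of $2$ and $t$ then upgrades $t\mid\overline{p}_{t-1}(n)$ to $2t\mid\overline{p}_{t-1}(n)$, giving the evenness of each class $\overline{N}_{t-1}(i,t,n)=\overline{p}_{t-1}(n)/t$. The paper instead constructs a \emph{toggling involution}: take the largest part of the multi-overpartition (under a fixed total order on colored parts) and flip its overline status. This involution fixes no non-empty multi-overpartition and preserves the multirank $\overline{r}$, so it pairs up the elements in each rank class, showing directly that every $\overline{N}_{t-1}(m,n)$ (and hence every $\overline{N}_{t-1}(i,t,n)$) is even for all $n\geq 1$, not only for quadratic nonresidues. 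Your route is shorter and purely generating-function based; the paper's route is combinatorial and yields a slightly stronger parity statement than the theorem requires. Both are valid.
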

\begin{proof}
By Definition~\eqref{multirank}, it is straightforward to write the generating function for $\overline{N}_{t-1}(m,n)$:
\begin{align}\label{gf:over-part}
\sum_{m=-\infty}^{\infty}\sum_{n=0}^{\infty}\overline{N}_{t-1}(m,n)z^{m}q^{n}=
\dfrac{(-zq,-z^{-1}q,\cdots,-z^{s}q,-z^{-s}q;q)_{\infty}}{(zq,z^{-1}q,\cdots,z^{s}q,z^{-s}q;q)_{\infty}},
\end{align}
where $t=2s+1$. By setting $z=\zeta_{t}=e^{2\pi i/t}$ in \eqref{gf:over-part}, we see that
\begin{align}\label{gf2:over-part}
 &\sum_{m=-\infty}^{\infty}\sum_{n=0}^{\infty}\overline{N}_{t-1}(m,n)\zeta_{t}^{m}q^{n}=\sum_{n=0}^{\infty}\sum_{i=0}^{t-1}\overline{N}_{t-1}(i,t,n)\zeta_{t}^{i}q^{n}\\
 =&\dfrac{(-\zeta q,-\zeta^{-1}q,\cdots,-\zeta^{s}q,-\zeta^{-s}q;q)_{\infty}}{(\zeta q,\zeta^{-1}q,\cdots,\zeta^{s}q,\zeta^{-s}q;q)_{\infty}}\nonumber\\
 =&\dfrac{(-q^{t};q^{t})_{\infty}(q;q)_{\infty}}{(q^{t};q^{t})_{\infty}(-q;q)_{\infty}}\nonumber\\
 =&\dfrac{(-q^{t};q^{t})_{\infty}\sum_{n=-\infty}^{\infty}(-1)^{n}q^{n^{2}}}{(q^{t};q^{t})_{\infty}}.\nonumber
\end{align}
Here the third equality uses that $\zeta_{t}$ is a primitive $t$-th root of unity, hence satisfies the following identity
\begin{align*}
(1-x^t)=(1-x)(1-\zeta_tx)\cdots(1-\zeta_t^{t-1}x).
\end{align*}
The last equality relies on $\varphi(-q)$'s product representation \eqref{phi}. Note that if $n$ is a quadratic nonresidue mod $t$, then the coefficient of $q^{n}$ in \eqref{gf2:over-part} is always zero, which gives
\begin{align}\label{poly}
\sum_{i=0}^{t-1}\overline{N}_{t-1}(i,t,n)\zeta_{t}^{i}=0.
\end{align}
We note that the left-hand side of \eqref{poly} is a polynomial in $\zeta_{t}$ over $\mathbb{Z}$. It follows that
\begin{align*}
\overline{N}_{t-1}(i,t,n)=\overline{N}_{t-1}(j,t,n)
\end{align*}
for all $0\leq i\leq j\leq t-1$, since $t$ is prime and the minimal polynomial for $\zeta_{t}$  over $\mathbb{Q}$ is
\begin{align*}
1+x+x^{2}+\cdots+x^{t-1}.
\end{align*}
Moreover, to see why each $\overline{N}_{t-1}(i,t,n)$ must be even, one simply recalls the natural ``toggling involution'' defined on overpartitions. Namely, for any given overpartition $\overline{\lambda}$, we examine its largest part $\lambda_1$. If $\lambda_1$ is already overlined, then we remove the bar to make it non-overlined, and if $\lambda_1$ is non-overlined, then we overline it. This is clearly a well-defined involution on the set of all non-empty overpartitions, and it can be generalized to apply on multi-overpartitions, as long as we predetermine the order of parts with equal size but different color, so that we can always find ``the'' largest part. Then the key observation to make here is that our multirank $\overline{r}$ is preserved under this involution. Therefore the two multi-overpartitions paired up by this involution are counted by the same $\overline{N}_{t-1}(i,t,n)$, forcing it to be an even number. This establishes \eqref{rank:multi-over}, which in turn implies \eqref{cong:multi-over}.
\end{proof}
\begin{remark}
Several remarks on Theorem~\ref{thm:multi-over} are in order. First note that \eqref{cong:multi-over} was first obtained by Keister, Sellers and Vary \cite{KSV}, but the complete multirank explanation in \eqref{rank:multi-over} appears to be new. Secondly, Bringmann and Lovejoy \cite{BL} were the first to study arithmetic properties for overpartition pairs and derived the $t=3$ case of \eqref{cong:multi-over}. They also defined certain rank for overpartition pairs that would yield a combinatorial explanation similar to \eqref{rank:multi-over}. Their rank is different from our multirank and seems to be more involved. Moreover, Chen and Lin defined in \cite{CLi} three different kinds of rank to give combinatorial explanations of $\overline{p}_{2}(3n+2)\equiv0\pmod{3}$. The first of their three ranks is exactly our multirank for $t=3$, so from this perspective, Theorem~\ref{thm:multi-over} can be viewed as a natural generalization of the work of Chen and Lin. Lastly, there is another type of ``$n$-color overpartition'' studied by Lovejoy and Mallet \cite{LM}, which provides combinatorial interpretations to several interesting $q$-series identities involving twisted divisor functions.
\end{remark}

\subsection{Multirank for multi-pods}
We continue to develop analogous congruence results for $\hat{p}_t(n)$, using the same multirank $\overline{r}$ as defined in \eqref{multirank}.
\begin{theorem}\label{thm:multi-pod pa}
Let $t$ be any odd prime, if for $n\geq 0$, $8n+1$ is a quadratic nonresidue modulo t, then for all $0\leq i\leq t-1$, we have
\begin{align}\label{rank:multi-pod}
\hat{N}_{t-1}(i,t,n)=\dfrac{\hat{p}_{t-1}(n)}{t}.
\end{align}
In particular, we have
\begin{align}\label{cong:multi-pod}
\hat{p}_{t-1}(n)\equiv 0 \pmod t.
\end{align}
\end{theorem}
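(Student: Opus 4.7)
The plan is to follow the same roots-of-unity strategy as in Theorem~\ref{thm:multi-over}, only with the role of $\varphi(-q)$ replaced by $\psi(-q)$. A pod has distinct odd parts and unrestricted even parts, so the single-component generating function with $z$ marking $\ell(\pi)$ is $(-zq;q^{2})_{\infty}/(zq^{2};q^{2})_{\infty}$. Writing $t=2s+1$ and following \eqref{multirank}, I would assign weight $z^{k}$ to $\pi_{k}$ and $z^{-k}$ to $\pi_{t-k}$ for $1\leq k\leq s$ to obtain
\begin{align*}
\sum_{m=-\infty}^{\infty}\sum_{n=0}^{\infty}\hat{N}_{t-1}(m,n)z^{m}q^{n}=\prod_{k=1}^{s}\frac{(-z^{k}q,-z^{-k}q;q^{2})_{\infty}}{(z^{k}q^{2},z^{-k}q^{2};q^{2})_{\infty}}.
\end{align*}

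Next, I would set $z=\zeta_{t}$. For odd $t$ the cyclotomic identities
\begin{align*}
\prod_{k=1}^{t-1}(1-\zeta_{t}^{k}x)=\frac{1-x^{t}}{1-x},\qquad \prod_{k=1}^{t-1}(1+\zeta_{t}^{k}x)=\frac{1+x^{t}}{1+x}
\end{align*}
collapse the inner $k$-product factor by factor in $m$. The even-$q$-power part gives $(q^{2t};q^{2t})_{\infty}/(q^{2};q^{2})_{\infty}$ and the odd-$q$-power part gives $(-q^{t};q^{2t})_{\infty}/(-q;q^{2})_{\infty}$. Combining these and invoking \eqref{psi}, I expect to arrive at the clean identity
\begin{align*}
\sum_{n=0}^{\infty}\sum_{i=0}^{t-1}\hat{N}_{t-1}(i,t,n)\zeta_{t}^{i}q^{n}=\frac{(-q^{t};q^{2t})_{\infty}}{(q^{2t};q^{2t})_{\infty}}\,\psi(-q)=\frac{\psi(-q)}{\psi(-q^{t})}.
\end{align*}

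Then I would exploit the series expansion $\psi(-q)=\sum_{k\geq 0}(-1)^{k(k+1)/2}q^{k(k+1)/2}$. Every $q^{n}$ that appears on the right-hand side has exponent of the form $n=jt+k(k+1)/2$ for some integers $j,k\geq 0$, so modulo $t$ one has $8n+1\equiv (2k+1)^{2}\pmod{t}$, a quadratic residue. Consequently, when $8n+1$ is a quadratic nonresidue modulo $t$, the coefficient of $q^{n}$ vanishes, giving $\sum_{i=0}^{t-1}\hat{N}_{t-1}(i,t,n)\zeta_{t}^{i}=0$. Since $1+x+\cdots+x^{t-1}$ is the minimal polynomial of $\zeta_{t}$ over $\mathbb{Q}$, all the integers $\hat{N}_{t-1}(i,t,n)$ must coincide, yielding \eqref{rank:multi-pod} and in turn the congruence \eqref{cong:multi-pod}.

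The main technical step is the cyclotomic bookkeeping in the middle paragraph: one must carefully separate the odd-$q$-power factors from the even-$q$-power factors in the product and then re-assemble them so as to recognise $\psi(-q)/\psi(-q^{t})$. Unlike the overpartition setting, there is no analogue of the bar-toggling involution available here (the odd parts of a pod are distinct but carry no decoration that could be flipped), which is consistent with the absence of an extra factor of $2$ in \eqref{rank:multi-pod}.
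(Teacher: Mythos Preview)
Your proposal is correct and follows essentially the same roots-of-unity argument as the paper: write the bivariate generating function, specialize $z=\zeta_t$, collapse the product via the cyclotomic identity to obtain $\dfrac{(-q^{t};q^{2t})_{\infty}}{(q^{2t};q^{2t})_{\infty}}\,\psi(-q)$, and then use the triangular-number expansion of $\psi(-q)$ together with the minimal-polynomial argument. The only cosmetic difference is that the paper performs the ``completing the square'' step by the explicit substitution $q\mapsto q^{8}$ (so that the exponents become literally $(2k+1)^{2}$), whereas you carry out the equivalent arithmetic observation $8\binom{k+1}{2}+1=(2k+1)^{2}$ directly; your additional identification of the prefactor as $1/\psi(-q^{t})$ is a nice touch but not needed for the argument.
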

\begin{proof}
Similarly, we begin with the generating function for $\hat{N}_{t-1}(m,n)$ and assume $t=2s+1$:
\begin{align}\label{gf:multi-pod}
\sum_{m=-\infty}^{\infty}\sum_{n=0}^{\infty}\hat{N}_{t-1}(m,n)z^{m}q^{n}=
\dfrac{(-zq,-z^{-1}q,\cdots,-z^{s}q,-z^{-s}q;q^{2})_{\infty}}{(zq^{2},z^{-1}q^{2},\cdots,z^{s}q^{2},z^{-s}q^{2};q^{2})_{\infty}},
\end{align}
By putting $z=\zeta_{t}=e^{2\pi i/t}$ again in \eqref{gf:multi-pod}, we see that
\begin{align*}
 &\sum_{m=-\infty}^{\infty}\sum_{n=0}^{\infty}\hat{N}_{t-1}(m,n)\zeta_{t}^{m}q^{n}=\sum_{n=0}^{\infty}\sum_{i=0}^{t-1}\hat{N}_{t-1}(i,t,n)\zeta_{t}^{i}q^{n}\\
 =&\dfrac{(-\zeta q,-\zeta^{-1}q,\cdots,-\zeta^{s}q,-\zeta^{-s}q;q^{2})_{\infty}}{(\zeta q^2,\zeta^{-1}q^{2},\cdots,\zeta^{s}q^{2},\zeta^{-s}q^{2};q^{2})_{\infty}}\nonumber\\
 =&\dfrac{(-q^t;q^{2t})_{\infty}(q^{2};q^{2})_{\infty}}{(q^{2t};q^{2t})_{\infty}(-q;q^{2})_{\infty}}\nonumber\\
 =&\dfrac{(-q^{t};q^{2t})_{\infty}\sum_{n=0}^{\infty}(-q)^{n(n+1)/2}}{(q^{2t};q^{2t})_{\infty}}.
\end{align*}
Here the third equality is again due to $\zeta_{t}$ being a primitive $t$-th root of unity, and the last equality relies on $\psi(-q)$'s product representation \eqref{psi}. Upon completing the square, we arrive at
\begin{align}\label{gf2:over-pod}
\sum_{n=0}^{\infty}\sum_{i=0}^{t-1}\hat{N}_{t-1}(i,t,n)\zeta_{t}^{i}q^{8n+1}=\dfrac{(-q^{8t};q^{16t})_{\infty}\sum_{n=0}^{\infty}(-1)^{n(n+1)/2}q^{(2n+1)^2}}{(q^{16t};q^{16t})_{\infty}}.
\end{align}
We see that in the $q$-expansion on the right side of \eqref{gf2:over-pod} the coefficient of $q^{n}$ is zero when $n$ is a quadratic nonresidue modulo $t$. This together with a similar argument using the minimal polynomial for $\zeta_t$ gives \eqref{rank:multi-pod}, which in turn implies \eqref{cong:multi-pod} and completes the proof.
\end{proof}

\begin{table}[htbp]\caption{Three kinds of rank for pod pairs of $5$}
\centering
\begin{tabular}{|c||c|c|c|}
pod pairs  &Chan-Mao rank &Chan-Mao crank & Multirank $\overline{r}$ \\
$(5,0)$ &2 &0 &1  \\
$(4~1,0)$ &0 &1 &2  \\
$(3~2,0)$ &0 &1 &2  \\
$(2~2~1,0)$ &$-2$ &2 &3  \\
$(4,1)$ &0 &1 &0  \\
$(3~1,1)$ &$-1$ &0 &1 \\
$(2~2,1)$ &$-2$ &2 &1  \\
$(3,2)$ &1 &$-1$ &0  \\
$(2~1,2)$ &$-1$ &0 &1  \\
$(2,2~1)$ &$-1$ &0 &$-1$  \\
$(2,3)$ &0 &1 &0  \\
$(1,2~2)$ &0 &$-2$ &$-1$  \\
$(1,3~1)$ &$-1$ &0 &$-1$  \\
$(1,4)$ &1 &$-1$ &0  \\
$(0,2~2~1)$ &0 &$-2$ &$-3$  \\
$(0,3~2)$ &1 &$-1$ &$-2$  \\
$(0,4~1)$ &1 &$-1$ &$-2$  \\
$(0,5)$ &2 &0  &$-1$  \\
\end{tabular}\label{3rank}
\end{table}

\begin{remark}
To the best of our knowledge, both \eqref{rank:multi-pod} and \eqref{cong:multi-pod} seem to be new, but the special case of $t=3$ was first found by Chen and Lin \cite{CL2}. Their rank is exactly our multirank for $t=3$, so Theorem~\ref{thm:multi-pod pa} can be viewed as a natural generalization of the work of Chen and Lin as well. Later, Chan and Mao \cite{CM} actually discovered two statistics, rank and crank, both lead to combinatorial explanations comparable to \eqref{rank:multi-pod}. Indeed, their rank is analogous to Bringmann and Lovejoy's rank \cite{BL} for overpartition pairs, while their crank is reminiscent of the third (out of three) rank for overpartition pairs studied by Chen and Lin \cite{CLi}.
\end{remark}

We note that Chan and Mao asked if there is a bijection linking their rank and crank (see the remark after Theorem 1.7 in \cite{CM}). Now this multirank $\overline{r}$ only adds to the mystery of this problem. We illustrate the case with pod pairs of $5$ in Table~\ref{3rank} above, and the interested readers are referred to Chan and Mao's original paper \cite{CM} for the definitions of their rank and crank. Note that all three ranks listed in Table~\ref{3rank} are symmetric, i.e., the number of pod pairs of $n$ with rank $m$ is the same as the number of pod pairs of $n$ with rank $-m$, a fact that easily follows from considering the ``conjugation'' $(\lambda,\mu)\mapsto (\mu,\lambda)$, which reverses the sign of both Chan-Mao crank and multirank, but not for Chan-Mao rank. Interpreting this symmetry combinatorially for Chan-Mao rank might be a starting point of finding a bijection.

We conclude this section with another family of congruences enjoyed by $p_t(n)$, $\overline{p}_t(n)$ and $\hat{p}_t(n)$, which have combinatorial explanations using a modified multirank. For odd prime $t$ and any $t$-colored partition $\overrightarrow{\pi}=(\pi_1,\pi_2,\cdots,\pi_t)\in\mathcal{P}^t$ (resp. $\overline{\mathcal{P}}^t$, $\hat{\mathcal{P}}^t$), we define another multirank $r^{*}(\overrightarrow{\pi})$ as
\begin{align*}
r^{*}(\overrightarrow{\pi})=\sum_{k=1}^{(t-1)/2}k(\ell(\pi_{k})-\ell(\pi_{t-k})).
\end{align*}
And we let $M_t(m,n)$ (resp. $\overline{M}_t(m,n)$, $\hat{M}_t(m,n)$) denote the number of $t$-colored partitions (resp. overpartitions, pods) of $n$ with multirank $r^{*}$ equals $m$. We state the following result, the proof of which follows the same line as Theorem~\ref{thm:multi-over} and Theorem~\ref{thm:multi-pod pa}, without invoking \eqref{phi} or \eqref{psi}, thus is omitted. We note that cases of Theorem~\ref{thm:newmulti} have undoubted appeared in the literature (see \cite{Chen,For,Hir1,New}, to name a few). It is the uniformity in the use of our second multirank $r^*$ to obtain congruences combinatorially that we are trying to emphasize here.
\begin{theorem}\label{thm:newmulti}
Let $t$ be any odd prime, if $n\not \equiv 0 \pmod t$, then for all $0\leq i\leq t-1$, we have
\begin{align*}
M_{t}(i,t,n)&=\dfrac{p_{t}(n)}{t};\\
\overline{M}_{t}(i,t,n)&=\dfrac{\overline{p}_{t}(n)}{t}\equiv 0 \pmod 2;\\
\hat{M}_{t}(i,t,n)&=\dfrac{\hat{p}_{t}(n)}{t}.
\end{align*}
In particular, we have
\begin{align*}
p_t(n)\equiv \hat{p}_t(n)\equiv 0 \pmod t, \text{ and }\; \overline{p}_t(n)\equiv 0 \pmod {2t}.
\end{align*}
\end{theorem}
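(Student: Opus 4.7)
The plan is to imitate the three-step template of Theorems~\ref{thm:multi-over} and \ref{thm:multi-pod pa} on a slightly larger family: we now have $t$ components instead of $t-1$, and the multirank $r^*$ leaves the last component $\pi_t$ entirely unweighted. First I would write down the $(z,q)$-bivariate generating functions for $M_t(m,n)$, $\overline{M}_t(m,n)$ and $\hat{M}_t(m,n)$. For multipartitions this reads
\begin{align*}
\sum_{m,n} M_t(m,n)\,z^m q^n = \frac{1}{(q;q)_\infty}\prod_{k=1}^{(t-1)/2}\frac{1}{(z^k q, z^{-k}q;\, q)_\infty},
\end{align*}
while the multi-overpartition and multi-pod versions are obtained by inserting the usual numerator products $(-z^{\pm k}q;q)_\infty$ or $(-z^{\pm k}q;q^2)_\infty$ (with the bases switched to $q^2$ in the pod denominators), together with leading factors $(-q;q)_\infty/(q;q)_\infty$ or $(-q;q^2)_\infty/(q^2;q^2)_\infty$ coming from the unweighted component $\pi_t$.

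Next I would set $z=\zeta_t=e^{2\pi i/t}$. Once the leading factor is absorbed, the full set of $t$-th roots $\{\zeta_t^k: 0\le k\le t-1\}$ appears symmetrically in the denominators, and similarly in the numerators for the overpartition and pod cases. The cyclotomic factorizations $\prod_{k=0}^{t-1}(1-\zeta_t^k x)=1-x^t$ and, since $t$ is odd, $\prod_{k=0}^{t-1}(1+\zeta_t^k x)=1+x^t$ then collapse each product, specializing the three generating functions respectively to
\begin{align*}
\frac{1}{(q^t;q^t)_\infty},\qquad \frac{(-q^t;q^t)_\infty}{(q^t;q^t)_\infty},\qquad \frac{(-q^t;q^{2t})_\infty}{(q^{2t};q^{2t})_\infty}.
\end{align*}
Each is a pure power series in $q^t$, so the coefficient of $q^n$ vanishes as soon as $t\nmid n$.

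For such $n$ I would then conclude that $\sum_{i=0}^{t-1} M_t(i,t,n)\zeta_t^i$, $\sum_{i=0}^{t-1} \overline{M}_t(i,t,n)\zeta_t^i$, and $\sum_{i=0}^{t-1} \hat{M}_t(i,t,n)\zeta_t^i$ all vanish, and the minimal-polynomial argument already deployed in the proof of Theorem~\ref{thm:multi-over} forces all $t$ residue classes to carry the same count, yielding $M_t(i,t,n)=p_t(n)/t$, $\overline{M}_t(i,t,n)=\overline{p}_t(n)/t$, and $\hat{M}_t(i,t,n)=\hat{p}_t(n)/t$. The divisibility by $2$ in the overpartition case is inherited verbatim from the toggling involution of Theorem~\ref{thm:multi-over}: since $r^*$ depends only on part-counts, overlining or removing the bar on the globally largest part preserves the multirank and pairs up multi-overpartitions sharing the same $r^*$-residue. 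I do not expect a genuine obstacle; the only point needing care is the bookkeeping for the unpaired component $\pi_t$, whose contribution must supply exactly the missing $k=0$ factor so that the product over all $t$-th roots of unity is complete. This is precisely why no appeal to the identities \eqref{phi} or \eqref{psi} is required — the cyclotomic collapse produces a clean $q^t$-series on its own.
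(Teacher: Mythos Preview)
Your proposal is correct and matches exactly the approach the paper indicates: it says the proof ``follows the same line as Theorem~\ref{thm:multi-over} and Theorem~\ref{thm:multi-pod pa}, without invoking \eqref{phi} or \eqref{psi},'' and you have filled in precisely those details, including the key observation that the unweighted component $\pi_t$ supplies the missing $k=0$ factor so that the cyclotomic collapse yields a pure power series in $q^t$.
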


%%%%%%%%%%%%%%%%%%%%%%%%%%%%%%%%%%%
\section{Cubic partitions, overcubic partitions and the related multi-colored partitions}
\label{sec: cubic}
%%%%%%%%%%%%%%%%%%%%%%%%%%%%%%%%%%%%%
\subsection{A unified crank analog for cubic partitions and overcubic partitions}
In a series of papers, Eggan \cite{Egg} and H.-C. Chan \cite{Cha1,Cha2,Cha3} independently studied congruence properties of a certain kind of partition $a(n)$, which arise from Ramanujan's cubic continued fraction. This partition function is defined by
\begin{align*}
\sum_{n=0}^{\infty}a(n)q^{n}=\dfrac{1}{(q;q)_{\infty}(q^{2};q^{2})_{\infty}}.
\end{align*}

Partition-theoretically, $a(n)$ can be interpreted as the number of 2-colored partitions of $n$ with colors red and blue subjecting to the restriction that the color blue appears only in even parts. By using identities from the cubic continued fraction found by Ramanujan, Chan derived a result analogous to ``Ramanujan's most beautiful identity'' \cite{Ram}, which implies immediately that
\begin{align}\label{cubic-mod3}
a(3n+2)\equiv0\pmod{3}.
\end{align}

Later, Kim \cite{Kim2} named the partitions enumerated by $a(n)$ as \emph{cubic partitions} and established a crank analog $M'(m,N,n)$ for them such that
\begin{align*}
M'(0,3,3n+2)\equiv M'(1,3,3n+2)\equiv M'(2,3,3n+2)\pmod{3},
\end{align*}
where $M'(m,N,n)$ is the number of cubic partitions of $n$ with Kim's crank congruent to $m$ modulo $N$. Note that the three residue classes divided by Kim's crank are only congruent modulo $3$, not equal with each other. However, Reti \cite{Ret} defined a different crank in his Ph.D. thesis, and his crank could actually break cubic partitions of $3n+2$ into three equinumerous subsets. We will briefly recall his crank here and show that it can be utilized to give similar combinatorial interpretations to overcubic partitions as defined by Kim \cite{Kim4}.

For a given cubic partition $\overrightarrow{\pi}=(\pi_{r},\pi_{b})$, Reti defined a cubic partition crank as
\begin{align}\label{crank:cubic part}
r_c(\pi)=\ell(\pi_{r}^{e})-\ell(\pi_{b}),
\end{align}
where $\pi_{r}^{e}$ is composed of all the even parts in red color. Let $C(m,n)$ be the number of cubic partitions of $n$ with $r_c$ equals $m$, then we have the generating function
\begin{align}\label{gf:cubic-pa}
\sum_{n=0}^{\infty}\sum_{m=-\infty}^{\infty}C(m,n)z^{m}q^{n}=\dfrac{1}{(q;q^{2})_{\infty}(zq^{2};q^{2})_{\infty}(z^{-1}q^{2};q^{2})_{\infty}}.
\end{align}
This sets us up for the next result due to Reti, which clearly implies \eqref{cubic-mod3}.
\begin{theorem}[Theorem~1, p.9\cite{Ret}]
For $n\geq 0$,
\begin{align*}
C(0,3,3n+2)=C(1,3,3n+2)=C(2,3,3n+2)=\dfrac{a(3n+2)}{3}.
\end{align*}
\end{theorem}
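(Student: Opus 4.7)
The plan is to follow the template of Theorems~\ref{thm:multi-over} and~\ref{thm:multi-pod pa}: evaluate~\eqref{gf:cubic-pa} at $z = \zeta_3 = e^{2\pi i/3}$, collapse the resulting product via the roots-of-unity identity used earlier in the paper, and verify that no $q^{3n+2}$ terms survive. A cyclotomic minimal-polynomial argument will then force the three enumerators $C(i,3,3n+2)$ ($i=0,1,2$) to coincide, and summing the three equal classes will produce the common value $a(3n+2)/3$.

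For the reduction, applying $(1-x)(1-\zeta_3 x)(1-\zeta_3^{-1}x)=1-x^3$ to the pairs with $x=q^{2k}$ and using $(q;q^2)_\infty=(q;q)_\infty/(q^2;q^2)_\infty$ turns the specialization of~\eqref{gf:cubic-pa} at $z = \zeta_3$ into
\begin{align*}
\sum_{n=0}^\infty \sum_{i=0}^{2} C(i,3,n)\,\zeta_3^i\, q^n \;=\; \frac{(q^2;q^2)_\infty^2}{(q;q)_\infty\,(q^6;q^6)_\infty}.
\end{align*}
The decisive observation is that the first two factors in this quotient combine, via the classical evaluation $(q^2;q^2)_\infty/(q;q^2)_\infty = \sum_{n\geq 0} q^{n(n+1)/2}$ (which is~\eqref{psi} with $q \mapsto -q$), to yield
\begin{align*}
\frac{1}{(q^6;q^6)_\infty}\sum_{n\geq 0}q^{n(n+1)/2}.
\end{align*}

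The vanishing of $q^{3n+2}$ coefficients is then a quick residue check: the triangular numbers $n(n+1)/2$ are periodic modulo~$3$ with pattern $0,1,0,0,1,0$ as $n$ runs from $0$ to $5$, so $2 \pmod 3$ is never attained, while $1/(q^6;q^6)_\infty$ contributes only $q^{6k}$ terms. Hence the full product omits every $q^{3n+2}$, which yields $\sum_{i=0}^{2}C(i,3,3n+2)\zeta_3^i=0$ in $\mathbb{Z}[\zeta_3]$. Since the minimal polynomial of $\zeta_3$ over $\mathbb{Q}$ is $1+x+x^2$ (an argument used verbatim in the earlier proofs), this forces $C(0,3,3n+2)=C(1,3,3n+2)=C(2,3,3n+2)$, and summing gives the common value $a(3n+2)/3$. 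The only step that requires insight is recognizing the telescoping to a triangular-number series; once that shape is in hand, the residue bookkeeping modulo~$3$ and the cyclotomic conclusion are routine.
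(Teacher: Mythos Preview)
Your proof is correct. The paper does not supply its own proof of this statement---it is quoted as Theorem~1 of Reti's thesis~\cite{Ret}---but your argument is precisely the cubic-partition analog of the paper's proof of Theorem~\ref{thm:overcubic}: specialize the two-variable generating function at a primitive third root of unity, collapse the product using $(1-x)(1-\zeta_3 x)(1-\zeta_3^{-1}x)=1-x^3$, recognize a classical theta function, check residues modulo~$3$, and finish with the minimal-polynomial argument. The only difference from the overcubic proof is that the theta function arising here is $\psi(q)=\sum_{n\ge 0}q^{n(n+1)/2}$ rather than $\varphi(q)=\sum_{n}q^{n^2}$, and you correctly invoke~\eqref{psi} (with $q\mapsto -q$) together with the fact that triangular numbers are never $\equiv 2\pmod 3$.
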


\begin{example}
In Table~\ref{Kim-FT} below, we compare Kim's crank with Reti's crank $r_c$ for the $12$ total cubic partitions of $5$. Note that the weighted count of Kim's crank divides $a(5)=12$ into residue classes $\equiv 0,1 \text{ and } 2 \pmod 3$, with size $2$, $5$ and $5$ respectively, while Reti's crank divides $a(5)=12$ into three residue classes, each with equal size $4$.
\end{example}
\begin{table}[htbp]\caption{Cubic partitions of $5$ with two types of cranks}\label{Kim-FT}
\centering
\begin{tabular}{|c||c|c|}
Cubic partitions  & Kim's crank & Reti's crank $r_c$ \\
$\{5_{r},\varnothing\}$ & $(1,5)$ & $0$\\
$\{4_{r}1_{r},\varnothing\}$ & $(1,0)$ & $1$ \\
$\{3_{r}2_{r},\varnothing\}$ & $(1,3)$ & $1$ \\
$\{3_{r}1_{r}1_r,\varnothing\}$ & $(1,-1)$ & $0$  \\
$\{2_{r}2_{r}1_r,\varnothing\}$ & $(1,1)$ & $2$ \\
$\{2_{r}1_{r}1_r1_r,\varnothing\}$ & $(1,-3)$ & $1$\\
$\{1_{r}1_{r}1_r1_r1_r,\varnothing\}$ & $(1,-5)$ & $0$\\
$\{3_{r},2_b\}$ & $(-1,3),(1,4),(1,2)$ & $-1$\\
$\{2_{r}1_r,2_b\}$ & $(-1,0),(1,1),(1,-1)$ & $0$\\
$\{1_{r}1_r1_r,2_b\}$ & $(-1,-3),(1,-2),(1,-4)$ & $-1$\\
$\{1_{r},4_b\}$ & $(-1,2),(1,3),(1,1)$ & $-1$\\
$\{1_{r},2_b2_b\}$ & $(-1,-2),(1,-1),(1,-3)$ & $-2$\\
\end{tabular}
\end{table}

In \cite{Kim4}, Kim introduced the notion of overcubic partitions $\overline{a}(n)$, defined by
\begin{align*}
\sum_{n=0}^{\infty}\overline{a}(n)q^{n}=\dfrac{(-q;q)_{\infty}(-q^{2};q^{2})_{\infty}}{(q;q)_{\infty}(q^{2};q^{2})_{\infty}}.
\end{align*}

Clearly $\overline{a}(n)$ can be interpreted as the overpartition analog of cubic partitions of $n$. By using the theory of modular functions, Kim also derived a result analogous to ``Ramanujan's most beautiful identity'' \cite{Ram} for overcubic partitions, which implies immediately that
\begin{align}
\overline{a}(3n+2)\equiv0\pmod{6}.\label{overcubic part}
\end{align}

Later, Hirschhorn \cite{Hir2} provided an elementary proof of \eqref{overcubic part} via $q$-series dissections. Interestingly, Reti's crank can be generalized to give a combinatorial interpretation for \eqref{overcubic part} as well, just note that now $\ell(\pi)$ counts all parts in $\pi$, both overlined and non-overlined.

Let $\overline{C}(m,n)$ be the number of overcubic partitions of $n$ with the crank $r_c$ equals $m$, then it easily follows that
\begin{align}
\sum_{n=0}^{\infty}\sum_{m=-\infty}^{\infty}\overline{C}(m,n)z^{m}q^{n}=\dfrac{(-q;q^{2})_{\infty}(-zq^{2};q^{2})_{\infty}(-z^{-1}q^{2};q^{2})_{\infty}}{(q;q^{2})_{\infty}
(zq^{2};q^{2})_{\infty}(z^{-1}q^{2};q^{2})_{\infty}}.\label{gf:overcubic part}
\end{align}
Furthermore, we derive the following result that immediately implies \eqref{overcubic part}.
\begin{theorem}\label{thm:overcubic}
For $n\geq0$,
\begin{align*}
\overline{C}(0,3,3n+2)=\overline{C}(1,3,3n+2)=\overline{C}(2,3,3n+2)=\dfrac{\overline{a}(3n+2)}{3}\equiv 0 \pmod 2.
\end{align*}
\end{theorem}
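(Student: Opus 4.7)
I would follow the same strategy as in the proofs of Theorem~\ref{thm:multi-over} and Theorem~\ref{thm:multi-pod pa}. Setting $z=\zeta_{3}=e^{2\pi i/3}$ in the generating function \eqref{gf:overcubic part} and invoking the two cyclotomic factorisations
\begin{align*}
(1-x)(1-\zeta_{3}x)(1-\zeta_{3}^{-1}x)=1-x^{3},\qquad (1+x)(1+\zeta_{3}x)(1+\zeta_{3}^{-1}x)=1+x^{3}
\end{align*}
termwise with $x=q^{2},q^{4},\ldots$, I obtain after routine cancellation
\begin{align*}
\sum_{n\geq0}\sum_{i=0}^{2}\overline{C}(i,3,n)\zeta_{3}^{i}q^{n}=\frac{(-q;q^{2})_{\infty}(-q^{6};q^{6})_{\infty}(q^{2};q^{2})_{\infty}}{(q;q^{2})_{\infty}(q^{6};q^{6})_{\infty}(-q^{2};q^{2})_{\infty}}=\frac{\varphi(-q^{2})^{2}}{\varphi(-q)\,\varphi(-q^{6})},
\end{align*}
where the final equality uses the product form \eqref{phi} together with $(q;q)_{\infty}=(q;q^{2})_{\infty}(q^{2};q^{2})_{\infty}$ and the analogous splitting of $(-q;q)_{\infty}$.

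The crucial simplification is then the classical theta identity $\varphi(q)\varphi(-q)=\varphi(-q^{2})^{2}$, which is immediate from the Jacobi triple product representations $\varphi(q)=(-q;q^{2})_{\infty}^{2}(q^{2};q^{2})_{\infty}$ and $\varphi(-q)=(q;q^{2})_{\infty}^{2}(q^{2};q^{2})_{\infty}$. It collapses the previous display to
\begin{align*}
\sum_{n\geq0}\sum_{i=0}^{2}\overline{C}(i,3,n)\zeta_{3}^{i}q^{n}=\frac{\varphi(q)}{\varphi(-q^{6})}.
\end{align*}
Since $\varphi(q)=\sum_{k\in\mathbb{Z}}q^{k^{2}}$ carries only exponents with $k^{2}\equiv 0$ or $1\pmod{3}$, and $\varphi(-q^{6})$ is manifestly a power series in $q^{6}$ (so that $1/\varphi(-q^{6})$ is too), their quotient has no $q^{3n+2}$ coefficient. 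The minimal-polynomial argument, identical to the one in the proof of Theorem~\ref{thm:multi-over}, then forces
\begin{align*}
\overline{C}(0,3,3n+2)=\overline{C}(1,3,3n+2)=\overline{C}(2,3,3n+2)=\dfrac{\overline{a}(3n+2)}{3}.
\end{align*}

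To establish the additional factor of $2$, I would adapt the overlining-toggle involution to the overcubic setting: given any non-empty $\overrightarrow{\pi}=(\pi_{r},\pi_{b})$, locate the largest part across both components (with a fixed tie-break, say preferring the red component, and within it preferring the overlined copy when present) and flip its overline status. Since $3n+2\geq 2$, this is a fixed-point-free involution on overcubic partitions of $3n+2$, and it affects neither $\ell(\pi_{r}^{e})$ nor $\ell(\pi_{b})$, so the crank $r_{c}$ of \eqref{crank:cubic part} is preserved; each of the three residue classes is therefore of even cardinality. The chief obstacle in the whole argument is spotting the theta identity $\varphi(q)\varphi(-q)=\varphi(-q^{2})^{2}$, which furnishes exactly the cancellation between numerator and denominator that reveals the $q^{3n+2}$-free structure; once that is in place, the remainder of the proof merely mirrors Theorem~\ref{thm:multi-over}.
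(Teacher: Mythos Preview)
Your proof is correct and follows essentially the same approach as the paper: substitute $z=\zeta_{3}$ in \eqref{gf:overcubic part}, simplify the resulting product to $\dfrac{(-q^{6};q^{6})_{\infty}}{(q^{6};q^{6})_{\infty}}\,\varphi(q)=\dfrac{\varphi(q)}{\varphi(-q^{6})}$, observe that $n^{2}\equiv 0,1\pmod 3$, apply the minimal-polynomial argument, and then invoke the overlining toggle for the parity. The only cosmetic difference is the route to the simplified product: the paper uses Euler's identity $(q;q^{2})_{\infty}(-q;q)_{\infty}=1$ to pass directly from $\dfrac{(-q;q^{2})_{\infty}}{(q;q^{2})_{\infty}(-q^{2};q^{2})_{\infty}}$ to $(-q;q^{2})_{\infty}^{2}$ and hence to $\varphi(q)$, whereas you detour through $\varphi(-q^{2})^{2}/\bigl(\varphi(-q)\varphi(-q^{6})\bigr)$ and then apply $\varphi(q)\varphi(-q)=\varphi(-q^{2})^{2}$ --- which is the same identity in disguise.
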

\begin{proof}
First note that the ``toggling involution'' described in the proof of Theorem~\ref{thm:multi-over} still preserves the $r_c$ crank of overcubic partitions, therefore each $\overline{C}(i,3,3n+2), i=0,1,2$ must be an even number. Next, by setting $z=\zeta_{3}=e^{2\pi i/3}$ in \eqref{gf:overcubic part} gives
\begin{align}\label{gf2:overcubic-pa}
 &\sum_{n=0}^{\infty}\sum_{m=-\infty}^{\infty}\overline{C}(m,n)\zeta_{3}^{m}q^{n}=\sum_{n=0}^{\infty}\sum_{i=0}^{2}\overline{C}(i,3,n)\zeta_{3}^{i}q^{n}\\
 =&\dfrac{(-q;q^{2})_{\infty}(-\zeta_{3}q^{2};q^{2})_{\infty}(-\zeta_3^{-1}q^{2};q^{2})_{\infty}}{(q;q^{2})_{\infty}(\zeta_{3}q^{2};q^{2})_{\infty}
 (\zeta_3^{-1}q^{2};q^{2})_{\infty}}\nonumber\\
 =&\dfrac{(-q^{6};q^{6})_{\infty}(-q;q^{2})_{\infty}^{2}(q^{2};q^{2})_{\infty}}{(q^{6};q^{6})_{\infty}}\nonumber\\
 =&\dfrac{(-q^{6};q^{6})_{\infty}}{(q^{6};q^{6})_{\infty}}\sum_{n=-\infty}^{\infty}q^{n^{2}}.\nonumber
\end{align}
Here we need the $\varphi(q)$ version of \eqref{phi} for the final equality. Note that $n^{2}\equiv0~\textrm{or}~1\pmod{3}$ for arbitrary integer $n$, thus the coefficient of $q^{3n+2}$ in \eqref{gf2:overcubic-pa} is always zero. We use the minimal polynomial argument again to complete the proof.
\end{proof}

For subsequent studies either directly involving $a(n)$ or encouraged by the success of $a(n)$, see for example \cite{BO,CC,CLi0,CT,Kim2,Kim3,Sin,Toh,Xio,ZZh,Zhou,ZZ}. In particular, we would like to mention the work of Zhao and Zhong \cite{ZZh} on the so-called ``\emph{cubic partition pairs}'' $b(n)$, whose generating function is given as
\begin{align}\label{gf: 12}
\sum_{n=0}^{\infty}b(n)q^{n}:=\dfrac{1}{(q;q)_{\infty}^{2}(q^{2};q^{2})_{\infty}^{2}}.
\end{align}
As well as the work of H.-C.~Chan and Cooper \cite{CC} on certain $4$-colored partition function $c(n)$, whose generating function is given as
\begin{align}\label{gf: 13}
\sum_{n=0}^{\infty}c(n)q^{n}:=\dfrac{1}{(q;q)_{\infty}^{2}(q^{3};q^{3})_{\infty}^{2}}.
\end{align}

The similar infinite product sides of both \eqref{gf: 12} and \eqref{gf: 13}, together with their common congruence properties modulo $5$ (see \eqref{cong:4c} below), inspired us to consider the problem of exhausting all such $4$-colored partition functions with Ramanujan type congruences modulo $5$. This is fully explained in the next subsection.

\subsection{A unified multirank for $15$ types of $4$-colored partitions}
Following H.-H.~Chan and Toh \cite{CT}, we define the generalized partition function $p_{[c^{l}d^{m}]}(n)$ by
\begin{align*}
\sum_{n=0}^{\infty}p_{[c^{l}d^{m}]}(n)q^{n}:=\dfrac{1}{(q^{c};q^{c})_{\infty}^{l}(q^{d};q^{d})_{\infty}^{m}}.
\end{align*}
For instance, \eqref{gf: 12} is then the case of $(c,d,l,m)=(1,2,2,2)$, while \eqref{gf: 13} is the case of $(c,d,l,m)=(1,3,2,2)$.

We focus on the cases with $l=m=2$. In terms of partition, this reduces to the generating function of certain $4$-colored partitions, say with colors {\bf r}ed, {\bf b}lue, {\bf y}ellow and {\bf o}range, subjecting to the restriction that parts in red and blue are divisible by $c$, while parts in yellow and orange are divisible by $d$. We call these ``\emph{$4$-colored partitions of type $(c,d)$}'', and denote the set of such partitions by $\mathcal{P}_{[c,d]}$. The following multirank enables us to uniformly derive congruences mod $5$ for $p_{[c^2d^2]}(n)$, with all possible values of $c$ and $d$.

\begin{Def}
For $\overrightarrow{\pi}=(\pi_r,\pi_b,\pi_y,\pi_o)\in\mathcal{P}_{[c,d]}$, we define the \emph{4-colored rank} $r_4$ as
\begin{align}\label{4c-rank}
r_4(\pi)=\ell(\pi_{r})-\ell(\pi_{b})+\ell(\pi_{y})-\ell(\pi_{o}).
\end{align}
We use $N_{[c,d]}(m,n)$ to enumerate all $4$-colored partitions of $n$, that are of type $(c,d)$ and with $4$-colored rank $m$.
\end{Def}
\begin{remark}
We note that for the purpose of combinatorially obtaining congruences, there are a number of alternatives that will work equally well as $r_4$. Namely, we can use $s\left(\ell(\pi_{r})-\ell(\pi_{b})\right)+t\left(\ell(\pi_{y})-\ell(\pi_{o})\right)$ for $s,t\not\equiv 0 \pmod 5$, to replace the right hand side of \eqref{4c-rank}. This way, the corresponding generating function will certainly be different, but in the forthcoming proof we will actually evaluate $z$ at a primitive $5$-th root of unity, and the ensuing argument does not depend on which primitive root we have chosen, therefore this change will not prevent us from getting our results. The reason that we have picked $r_4$ as the ``generic'' $4$-color rank is twofold.
\begin{itemize}
\item First it should be acknowledged that in the work of Zhou \cite{Zhou}, Zhou and Zhang \cite{ZZ}, they used exactly the same multirank as $r_4$ to explain congruences mod $5$ for $4$-colored partitions of type $(1,2),(1,3)$ and $(2,3)$ respectively, so our Theorem~\ref{thm:4c} below can be viewed as a natural complement to their work and shows the full power of $r_4$.
\item Secondly, this definition of $r_4$ entitles us to consider a natural rank-preserving bijection between $\mathcal{P}_{[c,d]}$ and $\mathcal{P}_{[d,c]}$: $(\pi_r,\pi_b,\pi_y,\pi_o)\mapsto (\pi_y,\pi_o,\pi_r,\pi_b)$. Consequently, it suffices to consider only the types with $c\leq d$.
\end{itemize}
\end{remark}
Now we are ready to state our main results on the congruences modulo $5$ for $p_{[c^2d^2]}(n)$.
\begin{theorem}\label{thm:4c}
For all integers $1\leq c \leq d, n\geq 0$, suppose $c\equiv i \pmod 5$ and $d\equiv j \pmod 5$, with $0\leq i,j\leq 4$. Then we have
\begin{align}\label{rank:4c}
N_{[c,d]}(k,5,5n+a_{ij})=\frac{p_{[c^2d^2]}(5n+a_{ij})}{5}, \text{ for } 0\leq k \leq 4,
\end{align}
where all possible values of $a_{ij}$ are summarized in Table~\ref{table:aij}. In particular, we have
\begin{align}\label{cong:4c}
p_{[c^2d^2]}(5n+a_{ij})\equiv 0 \pmod 5.
\end{align}
\end{theorem}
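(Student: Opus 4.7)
The plan is to follow the template of the proofs of Theorems~\ref{thm:multi-over} and~\ref{thm:multi-pod pa}: substitute $z = \zeta_5$ in the generating function for $N_{[c,d]}(m,n)$ induced by \eqref{4c-rank}, convert the right-hand side into a double theta series via a roots-of-unity trick and Jacobi's triple product, and exploit a parity involution on the triangular-number exponents to pin down the missing residue class modulo $5$.

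Starting from
\[
\sum_{m,n} N_{[c,d]}(m,n)\, z^m q^n = \frac{1}{(zq^c, z^{-1}q^c; q^c)_\infty (zq^d, z^{-1}q^d; q^d)_\infty},
\]
I would put $z = \zeta_5 = e^{2\pi i/5}$ and apply $\prod_{k=0}^{4}(1 - \zeta_5^k y) = 1 - y^5$ to the pair at each modulus, yielding
\[
\frac{1}{(\zeta_5 q^e, \zeta_5^{-1}q^e; q^e)_\infty} = \frac{(q^e;q^e)_\infty (\zeta_5^2 q^e, \zeta_5^{-2}q^e; q^e)_\infty}{(q^{5e};q^{5e})_\infty}, \qquad e \in \{c,d\}.
\]
Jacobi's triple product~\eqref{JTP}, applied with $a = -\zeta_5^2 q^e$ and $b = -\zeta_5^{-2}$ so that $ab = q^e$ matches the Pochhammer modulus, then gives
\[
(q^e;q^e)_\infty (\zeta_5^2 q^e, \zeta_5^{-2}q^e; q^e)_\infty = \frac{1}{1-\zeta_5^{-2}} \sum_{k\in\mathbb{Z}} (-1)^k \zeta_5^{2k} q^{eT(k)},
\]
where $T(k) = k(k+1)/2$. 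Combining these,
\[
G(\zeta_5, q) = \frac{\Sigma_c(q)\,\Sigma_d(q)}{(1-\zeta_5^{-2})^2 (q^{5c};q^{5c})_\infty (q^{5d};q^{5d})_\infty}, \qquad \Sigma_e(q) = \sum_k (-1)^k \zeta_5^{2k} q^{eT(k)}.
\]
Since the denominator lies in $\mathbb{Z}[[q^5]]$, it suffices to show that $[q^M]\Sigma_c(q)\Sigma_d(q) = 0$ for every $M \equiv a_{ij} \pmod 5$.

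Decomposing $k = 5\ell + r$ with $r\in\{0,1,2,3,4\}$, a direct calculation yields $T(5\ell + r) \equiv T(r) \pmod 5$ with $(T(0),T(1),T(2),T(3),T(4)) \equiv (0,1,3,1,0) \pmod 5$, so $T(r) \in \{0,1\} \pmod 5$ whenever $r \neq 2$. Splitting $\Sigma_c(q) = \sum_r (-1)^r \zeta_5^{2r} A_r(q)$ with $A_r(q) = \sum_\ell (-1)^\ell q^{cT(5\ell+r)}$, the symmetry $T(-k-1) = T(k)$ specialises at $k = 5\ell + 2$ to the fixed-point-free involution $\ell \leftrightarrow -\ell - 1$, which preserves the exponent but flips $(-1)^\ell$, forcing $A_2(q) \equiv 0$; by symmetry the corresponding $B_2(q)$ vanishes on the $d$-side. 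Thus no pair $(r,u)$ with $r = 2$ or $u = 2$ contributes to $\Sigma_c\Sigma_d$. For the surviving pairs $r,u\in\{0,1,3,4\}$, the residues $cT(r) + dT(u) \pmod 5$ range over $\{0, i, j, i+j\}\pmod 5$ with $c \equiv i$, $d \equiv j\pmod 5$. Any $a_{ij} \in \{0,1,2,3,4\}\setminus\{0,i,j,i+j\}\pmod 5$ therefore forces $[q^M]\Sigma_c\Sigma_d = 0$ whenever $M\equiv a_{ij}\pmod 5$, and Table~\ref{table:aij} records these permissible residues. The minimal-polynomial argument for $\zeta_5$ over $\mathbb{Q}$ (already used in Theorems~\ref{thm:multi-over} and~\ref{thm:multi-pod pa}) then equates the five quantities $N_{[c,d]}(k, 5, 5n + a_{ij})$, $k = 0, \ldots, 4$, establishing \eqref{rank:4c} and hence \eqref{cong:4c}.

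The main technical obstacle is the Jacobi triple product step: one must choose $a,b$ with $ab = q^e$, not the more symmetric $ab = q^{2e}$, so that the resulting theta series lives on the triangular-number lattice compatible with modulus $q^e$, enabling the clean fixed-point-free involution $\ell \leftrightarrow -\ell - 1$ that kills $A_2$ (respectively $B_2$). A secondary complication is the case analysis underlying Table~\ref{table:aij}: when $\{0,i,j,i+j\}$ collapses to fewer than four residues modulo $5$ (as when $i = 0$, $j = 0$, $i = j$, or $i+j \equiv 0 \pmod 5$), several values of $a_{ij}$ are admissible, and a canonical choice must be recorded for each of the fifteen unordered type pairs.
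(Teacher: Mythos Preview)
Your proof is correct and follows essentially the same route as the paper: both set $z=\zeta_5$, use the root-of-unity trick to extract $(q^{5c};q^{5c})_\infty(q^{5d};q^{5d})_\infty$ in the denominator, apply Jacobi's triple product to the remaining factors, and show that the terms with triangular index $\equiv 2\pmod 5$ drop out so that the surviving $q$-exponents lie in $\{0,i,j,i+j\}\pmod 5$. The only packaging difference is that the paper uses the folded identity~\eqref{JTP:iden} and reads the vanishing directly off the factor $1-(\zeta_5^{2})^{2n+1}$, whereas you keep the bilateral series and kill the $r=2$ piece via the sign-reversing involution $\ell\mapsto-\ell-1$ (these are two phrasings of the same pairing $k\leftrightarrow-k-1$); your explicit description of the admissible $a_{ij}$ as the complement of $\{0,i,j,i+j\}$ modulo $5$ is exactly what the paper verifies case by case in Table~\ref{table:aij}.
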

\begin{table}[htbp]\caption{Fifteen types of $4$-colored partitions with possible values of $a_{ij}$}\label{table:aij}
\centering
\begin{tabular}{|c|c|}
\hline
$(i,j)$  & $a_{ij}$ \\
\hline
$(0,0)$ & $1,2,3,4$ \\
$(0,1)$ & $2,3,4$ \\
$(0,2)$ & $1,3,4$ \\
$(0,3)$ & $1,2,4$ \\
$(0,4)$ & $1,2,3$ \\
$(1,1)$ & $3,4$ \\
$(1,2)$ & $4$ \\
$(1,3)$ & $2$ \\
$(1,4)$ & $2,3$ \\
$(2,2)$ & $1,3$ \\
$(2,3)$ & $1,4$ \\
$(2,4)$ & $3$ \\
$(3,3)$ & $2,4$ \\
$(3,4)$ & $1$ \\
$(4,4)$ & $1,2$ \\
\hline
\end{tabular}
\end{table}
\begin{proof}
We will first show the proof of type $(1,4)$ to illustrate the main idea, then we will explain its connection with the remaining types and the reason why it will suffice to consider only these fifteen types listed.

We begin with the generating function for $N_{[1,4]}(m,n)$:
\begin{align}\label{gf:14}
\sum_{n=0}^{\infty}\sum_{m=-\infty}^{\infty}N_{[1,4]}(m,n)z^{m}q^{n}=\dfrac{1}{(zq,z^{-1}q;q)_{\infty}(zq^{4},z^{-1}q^{4};q^{4})_{\infty}}.
\end{align}
We also need the following modified version of \eqref{JTP}:
\begin{align}\label{JTP:iden}
\prod_{n=1}^{\infty}(1-q^{n})(1-zq^{n})(1-z^{-1}q^{n})=\sum_{n=0}^{\infty}(-1)^{n}q^{n(n+1)/2}z^{-n}\left(\dfrac{1-z^{2n+1}}{1-z}\right).
\end{align}
Now, setting $z=\zeta_{5}=e^{2\pi i/5}$ in \eqref{gf:14}, we see that
\begin{align}\label{iden1}
 &\sum_{n=0}^{\infty}\sum_{m=-\infty}^{\infty}N_{[1,4]}(m,n)\zeta_{5}^{m}q^{n}=\sum_{n=0}^{\infty}\sum_{k=0}^{4}N_{[1,4]}(k,5,n)\zeta_{5}^{k}q^{n} \\
 =&\dfrac{1}{(\zeta_{5}q,\zeta_{5}^{-1}q;q)_{\infty}(\zeta_{5}q^{4},\zeta_{5}^{-1}q^{4};q^{4})_{\infty}} \nonumber\\
 =&\dfrac{(\zeta_{5}^{2}q,\zeta_{5}^{-2}q,q;q)_{\infty}(\zeta_{5}^2q^{4},\zeta_{5}^{-2}q^{4},q^{4};q^{4})_{\infty}}{(q^{5};q^{5})_{\infty}(q^{20};q^{20})_{\infty}}\nonumber\\
 =&\dfrac{\sum_{m,n=-\infty}^{\infty}(-1)^{m+n}q^{m(m+1)/2+2n(n+1)}\zeta_{5}^{-2m-2n}\left(1-(\zeta_{5}^2)^{2m+1}\right)
 \left(1-(\zeta_{5}^{2})^{2n+1}\right)}
{(1-\zeta_{5}^2)^2(q^{5};q^{5})_{\infty}(q^{20};q^{20})_{\infty}}. \nonumber
\end{align}
Here we use \eqref{JTP:iden} twice in the last equality. Since $m(m+1)/2\equiv0,1,3\pmod{5}$ and $2n(n+1)\equiv0,2,4\pmod{5}$, the power of $q$ is congruent to 2 modulo 5 only for one of the following two cases:
\begin{itemize}
\item[i.] $m(m+1)/2\equiv0\pmod{5}$ and $2n(n+1)\equiv 2\pmod{5}$;
\item[ii.] $m(m+1)/2\equiv3\pmod{5}$ and $2n(n+1)\equiv 4\pmod{5}$.
\end{itemize}
Case i leads to $n\equiv 2\pmod{5}$, while case ii forces $m\equiv 2\pmod5$, both of which result in $(1-(\zeta_5^2)^{2m+1})(1-(\zeta_5^2)^{2n+1})=0$, hence the coefficient of $q^{5n+4}$ in \eqref{iden1} is zero. The case of $5n+3$ can be similarly done. We have established \eqref{rank:4c} for $c=1,d=4$.

Next, note that if $c\equiv c', d\equiv d' \pmod 5$, then $c\binom{m+1}{2}+d\binom{n+1}{2}\equiv c'\binom{m+1}{2}+d'\binom{n+1}{2} \pmod 5$, since $\binom{m+1}{2}$ and $\binom{n+1}{2}$ are both integers. These two expressions are exactly the powers of $q$ as in \eqref{iden1} for type $(1,4)$. Therefore the two types $(c,d)$ and $(c',d')$ should have the same congruences modulo $5$, and it will suffice to consider the fifteen types of $(i,j)$ displayed in Table~\ref{table:aij}. For similar reasons, we see that if one of $i,j$ is $0$, it is actually easier to find the suitable values of $a_{ij}$ that will force $(1-(\zeta_5^2)^{2m+1})(1-(\zeta_5^2)^{2n+1})=0$. And the distinction between the cases with one value of $a_{ij}$ and those with two values of $a_{ij}$, is simply due to the fact that in one case there is a unique residue pair $(m,n)$ which results in both $(1-(\zeta_5^2)^{2m+1})$ and $(1-(\zeta_5^2)^{2n+1})$ being zero, while in the other case there are two possible residue pairs of $(m,n)$, each of which leads to one of $(1-(\zeta_5^2)^{2m+1})$ and $(1-(\zeta_5^2)^{2n+1})$ being zero, just like the type $(1,4)$ we have discussed above.
\end{proof}
\begin{remark}
Except for types $(1,1),(1,2),(1,3),(2,3)$, the congruences in \eqref{cong:4c} seem to be new, although all of them are of the same nature, a point that hopefully has been illustrated by \eqref{rank:4c} and its unified proof we supplied above. We make two more remarks concerning two specific types. First note that Kim \cite{Kim3} also gave a partition statistic which explained \eqref{cong:4c} for type $(1,2)$, i.e., cubic partition pairs. Unfortunately, the residue classes divided by his statistic were again merely congruent modulo $5$, not equinumerous.
Moreover, note that type $(1,1)$ actually coincides with the $t=5$ case of Theorem \ref{Gmulti} (i).
\end{remark}

%%%%%%%%%%%%%%%%%%%%%%%%%%%%%%%%%%%%%
\section{Final Remarks}
%%%%%%%%%%%%%%%%%%%%%%%%%%%%%%%%%%%%%
In this paper, we define a unified multirank $\overline{r}$ for multipartitions, multi-overpartitions and multi-pods, a unified multirank $r_4$ for fifteen types of $4$-colored partitions. And we generalize Reti's crank analog $r_c$ to overcubic partitions. These statistics are then shown to give combinatorial interpretations to a series of partition congruences. The three classical theta functions and properties of the $p$-th root of unity have always been the key ingredients in these derivations. This rank/crank-oriented approach is notably different from most of the studies in the literature.

We conclude with some suggestions for further investigation. Firstly, it is reasonable to envision generalizing Theorem~\ref{thm:4c} to odd prime moduli other than $5$. The most natural generalization of \eqref{cong:4c} that comes to our mind is, for odd prime $t\geq 7$,
\begin{align*}
p_{[i^{t-3}j^{t-3}]}(tn+a_{ij})\equiv 0 \pmod t,
\end{align*}
which is reminiscent of the second family of congruences for multipartitions in Theorem~\ref{Gmulti}. Following the same line of proving Theorem~\ref{thm:4c}, such analogous results can be derived case by case. But an explicit formula for computing $a_{ij}$ is more desirable and is necessary for full generalization.

Secondly, just as Ramanujan's original congruences for $p(n)$ have versions modulo high powers of prime, many of the partition functions studied in this paper do enjoy congruences modulo high powers of prime. But it seems few of these high power moduli congruences have been explained combinatorially using appropriate statistics. New ideas need to emerge for discovering these statistics.

Lastly, since moments of rank/crank have been shown to be quite important in the study of further congruences, we plan to investigate along this line and have already derived some results on the weighted count of the associated rank/crank generating functions of various partitions discussed here. These results will appear in \cite{FT} .

\section*{Acknowledgement}
The authors would like to thank Frank Garvan for sharing with us Zoltan Reti's Ph.D. thesis, and for his valuable suggestions that have improved this paper to a great extent. The authors also thank acknowledge the helpful suggestions made by the referee.

Both authors were supported by the Fundamental Research Funds for the Central Universities (No.~CQDXWL-2014-Z004) and the National Science Foundation of China (No.~115010\\61).

\end{document}